\theoremstyle{definition}
\newtheorem{thm}{Theorem}[section]
\newtheorem{rmk}[thm]{Remark}
\newtheorem{defn}[thm]{Definition}
\newtheorem{que}[thm]{Question}
\UseComputerModernTips \knottips{FF}
\def\Bracket#1{\mathord{\Big\langle\,~ \raise8pt\xybox{0;/r1.3pc/:#1}\,
~\Big\rangle}}%!!
\begin{document}

%\title{\normalsize\bf Marked graph diagrams of an immersed surface-link}
\title{Presentation of immersed surface-links by 
marked graph diagrams}

%\author{Seiichi Kamada, Akio Kawauchi, and Jieon Kim
%\\{}\\{\small Department of Mathematics,  Osaka City University,}\\
%{\small  Sumiyoshi, Osaka 558-8585, Japan}\\{\small E-mail: skamada@sci.osaka-cu.ac.jp,}
%\\{}
%\\{\small Department of Mathematics,   Osaka City University,}\\
%{\small  Sumiyoshi, Osaka 558-8585, Japan}\\{\small E-mail: kawauchi@sci.osaka-cu.ac.jp,}
%\\{}
%\\{\small Osaka City University Advanced Mathematical Institute,  Osaka City University}\\
%{\small and JSPS}\\ 
%{\small Sumiyoshi, Osaka 558-8585, Japan}\\{\small E-mail: jieonkim7@gmail.com}}

\author{Seiichi Kamada}
\address{Department of Mathematics,  Osaka City University, Sumiyoshi, Osaka 558-8585, Japan}
\email{skamada@sci.osaka-cu.ac.jp} 
\thanks{Supported by JSPS KAKENHI Grant Numbers 26287013 and 15F15319.}

\author{Akio Kawauchi}
\address{Osaka City University Advanced Mathematical Institute, Osaka City University, Sumiyoshi, Osaka 558-8585, Japan}
\email{kawauchi@sci.osaka-cu.ac.jp} 
\thanks{Supported by JSPS KAKENHI Grant Number 24244005.}

\author{Jieon Kim}
\address{Osaka City University Advanced Mathematical Institute, Osaka City University, Sumiyoshi, Osaka 558-8585, Japan}
\email{jieonkim@sci.osaka-cu.ac.jp} 
\thanks{The third author is International Research Fellow
of Japan Society for the Promotion of Science. }

\author{Sang Youl Lee}
\address{Department of Mathematics, Pusan National University, Busan 46241, Republic of Korea}
\email{sangyoul@pusan.ac.kr} 
\thanks{Supported by Basic Science Research Program through the National
Research Foundation of Korea(NRF) funded by the Ministry of Education, Science
and Technology (NRF-2016R1A2B4016029).}

\maketitle

\begin{abstract}
It is well known that 
surface-links in $4$-space can be presented by diagrams on the plane 
of $4$-valent spatial graphs with makers on the vertices, called marked graph diagrams.  
In this paper we extend the method of presenting surface-links by marked graph diagrams to presenting immersed surface-links. 
We also give some moves on marked graph diagrams that preserve the ambient isotopy classes of their presenting immersed surface-links. 
\end{abstract}

%%%%%%%%%%%%%%%%%%%%%%%%%%%%%%%%%%%%%%%%%%%%%%%%%%%%%%%%
%%%%%%%%%%%%%%%%%%%%%%%%%%%%%%%%%%%%%%%%%%%%%%%%%%%%%%%%

% main text
\section{Introduction}\label{sect-intr}
\label{intro}

A surface-link, or an {\it embedded} surface-link, is a closed surface embedded in Euclidean $4$-space $\mathbb R^4$.  
An {\it immersed surface-link} is a closed surface immersed in $\mathbb R^4$ such that the multiple points are transverse double points. It is well known that 
surface-links can be presented by diagrams on the plane 
of $4$-valent spatial graphs with makers on the vertices, called marked graph diagrams (cf. \cite{CKS2004, KamBook2017, Kaw, KeKU, Lo, Sw, Yo}).  

In this paper we extend the method of presenting surface-links by marked graph diagrams to presenting immersed surface-links.  
We also give some moves on marked graph diagrams that preserve the ambient isotopy classes of their presenting immersed surface-links, which are extension of moves given by Yoshikawa \cite{Yo} for presentation of embedded surface-links. 

%%%%%%%%%%%%%%%%%%%%%%%%%%%%%%%%%%%%%%%%%%%%%%%%%%%%%%%%
%%%%%%%%%%%%%%%%%%%%%%%%%%%%%%%%%%%%%%%%%%%%%%%%%%%%%%%%

\section{Marked graph diagrams of immersed surface-links}
\label{sect-mgd}

In this section, we introduce a marked graph presentation of immersed  surface-links. First, we recall quickly the notion of marked graph diagrams and links with bands from \cite{Sw, Yo}. 

Let $A$ be the square $\{(x, y)|-1 \leq x, y \leq 1\}$, $X$ be the diagonals in $A$ presented by $x^2 = y^2$, 
and $M_h$ (or $M_v$) be a thick interval in $A$ given by 
$\{(x, y)|-1/2 \leq x \leq 1/2, \, -\delta \leq y \leq \delta \}$ 
(or $\{(x, y)|-1/2 \leq y \leq 1/2, \, -\delta \leq x \leq \delta \}$), where $\delta$ is a small positive number. 

A {\it marked graph} (in $\mathbb R^3$) is a spatial graph $G$ in $\mathbb R^3$ which satisfies the following:
\begin{itemize}
  \item [(1)] $G$ is a finite regular graph with $4$-valent vertices.
  \item [(2)] Each vertex $v$ is rigid; that is, there is a neighborhood $N(v)$ of $v$ which is identified with thickened $A$ 
  such that 
  $v$ corresponds to the origin and the edges restricted to $N(v)$ correspond to $X$. 
  \item [(3)] Each $v$ has a {\it marker}, which is a thick interval in $N(v)$ which corresponds to $M_h$ or $M_v$ under the identification in (2). 
\end{itemize}
An {\it orientation} of a marked graph $G$ is a choice of an orientation for each edge of $G$ 
such that around every vertex $v$, two edges incident to $v$ in a diagonal position are oriented toward $v$ and the other two incident edges are oriented outward. For example, see Fig.~\ref{fig-ori-vert}.  

%\xy 
%(-5,5);(5,-5) **@{-}, 
%(5,5);(-5,-5) **@{-}, 
%(3,3.2)*{\llcorner}, 
%(-3,-3.4)*{\urcorner}, 
%(-2.5,2)*{\ulcorner},
%(2.5,-2.4)*{\lrcorner}, 
%(3,-0.2);(-3,-0.2) **@{-},
%(3,0);(-3,0) **@{-}, 
%(3,0.2);(-3,0.2) **@{-}
%\endxy. 

\begin{figure}[ht]
\begin{center}
\begin{picture}(0,10)
\xy 
(-5,5);(5,-5) **@{-}, 
(5,5);(-5,-5) **@{-}, 
(3,3.2)*{\llcorner}, 
(-3,-3.4)*{\urcorner}, 
(-2.5,2)*{\ulcorner},
(2.5,-2.4)*{\lrcorner}, 
(3,-0.2);(-3,-0.2) **@{-},
(3,0);(-3,0) **@{-}, 
(3,0.2);(-3,0.2) **@{-}
\endxy 
\end{picture}
\vspace{0.5cm} 
\caption{An orientation around a marked vertex}\label{fig-ori-vert}
\end{center}
\end{figure}
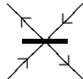

Not every marked graph admits an orientation. 
A marked graph is called {\it orientable} (or {\it non-orientable}) if it admits (or does not admit) an orientation. 
A marked graph depicted in 
Fig.~\ref{fig-nori-mg} is non-orientable. 
An {\it oriented marked graph} is a marked graph equipped with an orientation.  
Two (oriented) marked graphs are said to be {\it equivalent} if they are ambient isotopic in $\mathbb R^3$ with respect to markers as subsets of $\mathbb R^3$ (and the orientations).  

\begin{figure}[ht]
\begin{center}
\resizebox{0.2\textwidth}{!}{%
  \includegraphics{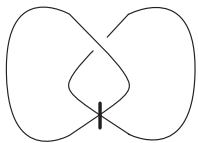}}
\caption{A non-orientable marked graph}\label{fig-nori-mg}
\end{center}
\end{figure}

A {\it banded link} $\mathcal {BL}$ (or a {\it link with bands}) is a pair $(L, \mathcal B)$ of a link $L$ in $\mathbb R^3$ and a set of mutually disjoint bands in $\mathbb R^3$ 
attached to $L$.  It is called {\it oriented} if $L$ is oriented and all bands are oriented coherently with respect to the orientation of $L$. In this case, the link obtained from $L$ by surgery along the bands inherits an orientation, see Fig. \ref{fig-lb}.  
Two (oriented) banded links are {\it equivalent} if there is an ambient isotopy of $\mathbb R^3$ carrying the (oriented) link and (oriented) bands of one to those of the other.

\begin{figure}[ht]
\begin{center}
\resizebox{0.6\textwidth}{!}{%
  \includegraphics{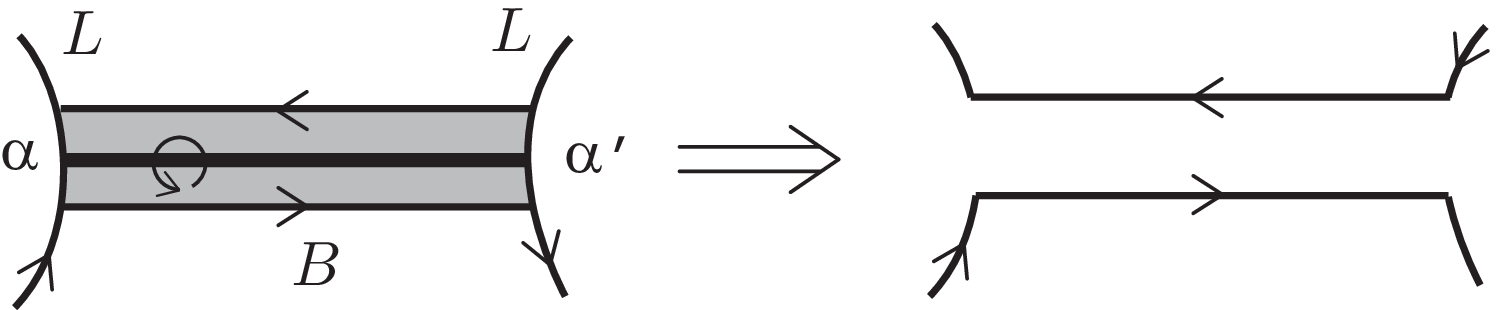}}
\caption{Surgery}\label{fig-lb}
\end{center}
\end{figure}

For a marked graph $G$, we obtain a banded link $(L,\mathcal B) $ by replacing a neighborhood of each $4$-valent vertex with a band such that the core of the band corresponds to the marker as in Fig.~\ref{fig-orbd-2} (b). The banded link is called the {\it banded link associated with $G$} and is denoted by $\mathcal {BL}(G)$. Conversely, a marked graph $G$ is recovered from a banded link $\mathcal {BL}$ by shortening and replacing each band to a $4$-valent vertex as in Fig.~\ref{fig-orbd-2} (a). 
If $G$ is oriented, then $\mathcal {BL}(G)$ is oriented, and vice versa. 

 \begin{figure}[ht]
\begin{center}
\resizebox{0.50\textwidth}{!}{%
  \includegraphics{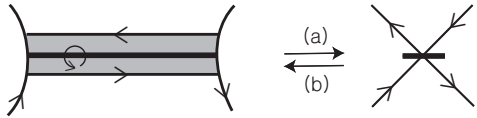} }
\caption{A band and a marked vertex}\label{fig-orbd-2}
\end{center}
\end{figure}

For a banded link $\mathcal {BL}=(L, \mathcal B)$, the {\it lower resolution} $L_-(\mathcal {BL})$ is $L$ and the
{\it upper resolution} $L_+(\mathcal {BL})$ is the surgery result.  For a marked graph $G$, the lower resolution $L_-(G)$ and the upper resolution $L_+(G)$ are defined to be those of the 
banded link $\mathcal {BL}(G)=(L, \mathcal B)$ associated with $G$.

We present a marked graph by a diagram on the plane, which we call a {\it marked graph diagram}, in a usual way in knot theory. 
% We assume that rectangular neighborhood $N(v)$ of each vertex $v$ is mapped into $\mathbb R^2$ homeomorphically, but the homeomorphism may be a rotation or reflection. 

Let $D$ be a marked graph diagram. We denote by $\mathcal {BL}(D)$, $L_-(D)$, and $L_+(D)$ the banded link, the lower  resolution and the upper resolution of the marked graph presented by $D$.  See Fig.~\ref{spun-mgraph-res}.

A link is called {\it H-trivial} if it is a split union of trivial knots and Hopf links \cite{KamKawamu}. A trivial link is regarded as an H-trivial link without Hopf links.  

\begin{defn} 
A marked graph diagram $D$ (or a marked graph $G$) 
is {\it H-admissible} if both resolutions $L_-(D)$ and $L_+(D)$ (or $L_-(G)$ and $L_+(G)$) 
are H-trivial links.
\end{defn}

\begin{figure}[h]
\begin{center}
\resizebox{0.40\textwidth}{!}{%
  \includegraphics{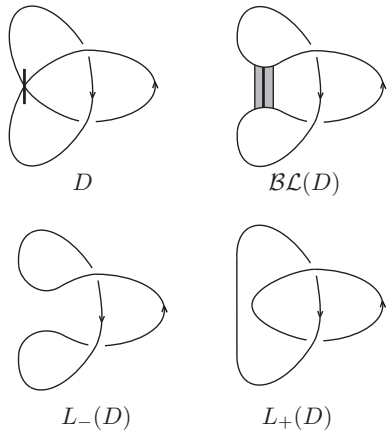} }
\caption{An H-admissible marked graph diagram} 
\label{spun-mgraph-res}
\end{center}
\end{figure}

A marked graph diagram $D$ (or a marked graph $G$) 
is called {\it admissible} if both resolutions $L_-(D)$ and $L_+(D)$ (or $L_-(G)$ and $L_+(G)$) 
are trivial links.  By definition, an admissible marked graph (diagram) is H-admissible.   

Now we discuss a marked graph presentation of an immersed surface-link.  

For a subset $A\subset \mathbb R^3$ and an interval $I\subset\mathbb R,$ let $$AI=\{(x,t)\in\mathbb R^4|x\in A, t\in I\}.$$

Let $D$ be an H-admissible marked graph diagram, and $\mathcal {BL}(D) = (L, \mathcal B)$ the banded link 
associated with $D$.  
Consider a surface $\mathcal{S}^1_{-1}$ in $\mathbb R^3 [-1,1]$ satisfying 
\begin{equation*}%\label{defn-surf-1}
\mathcal S^1_{-1} \cap \mathbb R^3[t]=\left\{%
\begin{array}{ll}
L_+(D)[t] & \hbox{for $0 < t \leq1$,} \\
(L_-(D) \cup |\mathcal B|)[t] & \hbox{for $t = 0$,} \\
L_-(D)[t] & \hbox{for $-1 \leq t < 0$,} \\
\end{array}%
\right.  
\end{equation*}
where $|\mathcal B|$ denotes the union of the bands belonging to $\mathcal B$.  

When $D$ is oriented, we assume that the surface $S^1_{-1}$ is oriented so that the orientation of 
$L_+(D)[1]$ as the boundary of $S^1_{-1}$ coincides with the orientation of 
$L_+(D)$ induced from $D$. 

Let $L$ be an H-trivial link with trivial knot components $O_i$ $(i=1,\ldots,m)$ 
and Hopf link components $H_j$ $(j=1,\ldots,n)$, where $m \geq 0$ and $n \geq 0$. For an interval $[a,b]$, let $L_{\wedge}[a,b]$ be the union of disks 
$\Delta_i$ $(i=1,\ldots,m)$ and $n$ pairs of disks $C_j$ $(j=1,\ldots,n)$ in $\mathbb R^3[a, b]$ such that (1) 
$\partial \Delta_i = O_i[a]$ and $\partial C_j = H_j[a]$, (2) $\Delta_i$ has a unique maximal point, (3) 
each disk of $C_j$ has a unique maximal point, and (4) the two disks of $C_j$ intersect in a point transversely. 
We call $\Delta_i$ $(i=1,\ldots,m)$ a {\it cone system} with base $O_i$ $(i=1,\ldots,m)$ 
and $C_j$ $(j=1,\ldots,n)$ a {\it cone system} with base $H_j$ $(j=1,\ldots,n)$. 

We often assume an additional condition: (5) 
for each cone $C_j$ over $H_j$, the intersection point of the two disks of $C_j$ in condition~(4) is the unique maximal point of each of the disks in condition~(3). 
Similarly, for an H-trivial link $L'$ with trivial knot components $O'_i$ $(i=1,\ldots,m')$ 
and Hopf link components $H'_j$ $(j=1,\ldots,n')$, where $m' \geq 0$ and $n' \geq 0$, 
let $L'_{\vee }[a,b]$ be the disjoint union of 
a cone system $\Delta'_i$ in $\mathbb R^3[a,b]$
with base $O'_i$ in $\mathbb R^3[a]$ $(i=1,\ldots,m')$ 
and a cone system $C'_j$ in $\mathbb R^3[a,b]$ with base $H'_j$ in $\mathbb R^3[a]$ $(i=1,\ldots,n')$, 
where each disk in the cone system has a unique minimal point.

Let $D$ be an H-admissible marked graph diagram.  Consider the union 
$$\mathcal S(D)=L'_\vee[-2,-1]\cup\mathcal S^1_{-1}\cup L_\wedge[1,2],$$ 
which is an immersed surface-link in $\mathbb R^4$,  where $L$ and $L'$ be upper and lower resolutions of $D$, respectively. 

By an argument in \cite{KamKawamu, KSS} it is seen that the ambient isotopy class 
of the immersed surface-link $\mathcal S(D)$ is uniquely determined from $D$.  
We call the immersed surface-link $\mathcal S(D)$ the {\it immersed surface-link constructed from} $D$.

\begin{thm}\label{prop-adm}
Let $\mathcal L$ be an immersed surface-link. There is an H-admissible marked graph diagram $D$ such that $\mathcal L$ is 
ambient isotopic to $\mathcal S(D)$. 
\end{thm}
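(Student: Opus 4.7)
The plan is to adapt the classical Morse-theoretic construction of a marked graph diagram from an embedded surface-link (Lomonaco, Kawauchi--Shibuya--Suzuki, Yoshikawa) by realizing each transverse double point of $\mathcal L$ as the intersection point of a pair of transversely meeting capping disks over a Hopf-link component of the upper (or lower) resolution of $D$.

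First I would put $\mathcal L$ into generic position with respect to the height function $t\colon\mathbb R^4\to\mathbb R$, $(x_1,x_2,x_3,x_4)\mapsto x_4$. A small ambient isotopy ensures that the restriction of $t$ to the normalization $\tilde{\mathcal L}$ of $\mathcal L$ is a Morse function with pairwise distinct critical values, and that the transverse double points of the immersion occur at heights distinct from every critical level and from each other. A standard rearrangement isotopy then collects all Morse minima in the slab $t\in(-2,-1)$, all saddles at the common level $t=0$ (as in the embedded case, saddles originally at different heights can be merged to a single level by isotopy), and all Morse maxima together with all double points in the slab $t\in(1,2)$. The slabs $t\in[-1,0)$ and $t\in(0,1]$ then contain neither critical points nor double points, so the slices $\mathcal L\cap\mathbb R^3[t]$ in these slabs are embedded links of isotopy type independent of $t$; set $L'=\mathcal L\cap\mathbb R^3[-1]$ and $L=\mathcal L\cap\mathbb R^3[1]$.

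At the saddle level $t=0$, shrinking a small neighborhood of each saddle to a band as in Fig.~\ref{fig-orbd-2} produces a banded link $(L',\mathcal B)$ in $\mathbb R^3[0]$, and shrinking each band further to a marked $4$-valent vertex yields a marked graph $G\subset\mathbb R^3[0]$; a regular projection of $G$ to a plane is a marked graph diagram $D$ with $L_-(D)=L'$ and $L_+(D)=L$. In the slab $t\ge 1$, $\mathcal L$ decomposes as a disjoint collection of smoothly embedded disks (one per Morse maximum, capping a trivial knot component of $L$) together with pairs of smoothly embedded disks meeting transversely at one point (one pair per double point, capping a Hopf link component of $L$), each cap or cap pair contained in a small $4$-ball. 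Because these $4$-balls can be arranged pairwise disjoint and spread wide apart by an ambient isotopy of $\mathbb R^3\times[1,\infty)$, the boundary link $L$ is a split union of trivial knots and Hopf links, i.e.\ H-trivial; the symmetric argument on $\mathbb R^3\times(-\infty,-1]$ shows $L'$ is H-trivial as well. Hence $D$ is H-admissible, and the three pieces $\mathcal L\cap\mathbb R^3[-2,-1]$, $\mathcal L\cap\mathbb R^3[-1,1]$, and $\mathcal L\cap\mathbb R^3[1,2]$ realize $L'_\vee[-2,-1]$, $\mathcal S^1_{-1}$, and $L_\wedge[1,2]$ in the definition of $\mathcal S(D)$, giving the required ambient isotopy from $\mathcal L$ to $\mathcal S(D)$.

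The main technical obstacle I expect is that, after rearranging all double points into the upper slab, each double point must be the intersection of two distinct capping disks rather than a self-intersection of a single capping disk. When a double point turns out to be a self-intersection of a single upper component, one has to insert an auxiliary saddle just below $t=1$ that splits the offending disk into two disks; such an insertion adds an extra marked vertex to $D$ but preserves the ambient isotopy class of $\mathcal S(D)$. Verifying that this splitting can always be arranged, and checking that the spreading isotopy used to make $L$ and $L'$ split links is compatible with the middle piece $\mathcal S^1_{-1}$, is the technical heart of the proof.
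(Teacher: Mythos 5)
Your overall strategy (Morse normal form, reading off the marked graph at a common saddle level, capping with cone systems) is the same as the paper's, but there is a genuine gap at the decisive step. You claim that once all maxima and all double points have been collected into the slab $t\in(1,2)$, the piece $\mathcal L\cap\{t\ge 1\}$ automatically decomposes into disjoint embedded disks capping trivial knot components of $L$ and transverse disk pairs capping Hopf link components of $L$, each in a small $4$-ball, so that $L$ is H-trivial. None of this follows from the rearrangement you performed. A single cap in the upper slab can carry several double points, or meet the same other cap more than once; the caps can be linked with one another in complicated ways even when disjoint; and, most importantly, even when two caps (each with a single maximum) meet transversely in exactly one point, their boundary need not be a Hopf link: descending from the tops, the two level circles form some (possibly very entangled) $2$-component unlink just above the level of the double point, and passing the double point is a single crossing change between the components, which produces a $2$-component link with unknotted components and linking number $\pm 1$ that is in general not the Hopf link. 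So H-triviality of $L_\pm(D)$, which is exactly what must be proved, is not obtained by your argument; and the difficulty you single out (a self-intersecting cap, to be repaired by an auxiliary saddle) is not the essential one.

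The paper's proof supplies the missing normalization. Each transverse double point has, by the local model of transversality, a small neighborhood in which $\mathcal L$ is the cone on a Hopf link with the double point as cone point, and each maximum has a small cone-on-unknot neighborhood. The proof then moves these small cone neighborhoods, by ambient isotopy, to the very top level $\mathbb R^3[3]$, strictly above all saddle points and all other parts of the surface (and symmetrically the minima, with cone-on-unknot neighborhoods, to the bottom level $\mathbb R^3[-3]$), keeping the cone 3-disks $N^3(d_i)$, $N^3(p_i)$ pairwise disjoint. With everything else confined below level $2$, the slice just below the top level is forced to be a split union of small trivial knots and Hopf links, and since the region between the saddle level and the top contains no critical points or double points, $L_+(D)$ is isotopic to that slice, hence H-trivial, and the upper piece is ambient isotopic to a standard cone system $L_\wedge$; similarly below. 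This also makes your auxiliary-saddle device unnecessary: the two branches at a double point are always two distinct local disks of the Hopf-link cone, and whether they are joined globally through the part of the surface lying below is irrelevant to the presentation. To repair your write-up you would have to add precisely this step of standardizing and raising the double-point and maximum cones above the rest of the surface (the argument of Kawauchi--Shibuya--Suzuki and Kamada--Kawamura), rather than asserting the split cap structure directly.
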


In the situation of this theorem,  we say that $\mathcal L$ is {\it presented} by $D$.

\begin{proof} The following argument is based on an argument in \cite{KSS} where embedded and oriented surface-links are discussed (cf. \cite{KamBook2017}). 
Let $\mathcal L$ be an immersed surface-link. Let $d_1, \dots, d_n$ be the double points of $\mathcal L$, 
and let 
$N(d_1), \dots, N(d_n)$ be regular neighborhoods of them.  
Moving $\mathcal L$ by an ambient isotopy, we may assume the following conditions:  
\begin{itemize}
\item[(1)] All critical points of $\mathcal L$, except the double points,  with respect to the projection $\mathbb R^4 = \mathbb R^3 \times \mathbb R \to \mathbb R$ are elementary critical points, that are maximal points, saddle points and minimal points. 
\item[(2)]  $\mathcal L$  is in $\mathbb R^3 (-2,2)$. 
\item[(3)] All double points are in $\mathbb R^3 [1]$. 
\item[(4)] For each $i$ $(i=1, \dots, n)$, $N(d_i) = N^3(d_i) [1-\epsilon, 1 + \epsilon]$ for a $3$-disk $N^3(d_i)$, and 
$N(d_i) \cap  \mathcal L $ is the cone of a Hopf link $H_i \subset ({\rm int} N^3(d_i))[1-\epsilon]$ with the cone point $d_i \in \mathbb R^3 [1]$. Here $\epsilon$ is a sufficiently small positive number.  
The $3$-disks $N^3(d_1), \dots, N^3(d_n)$ are mutually disjoint. 
\end{itemize} 

Move double points into $\mathbb R^3[3]$ such that the condition (4) is preserved although the $3$-disk $N^3(d_i)$ may change and the time level of $d_i$ changes from $1$ to $3$, i.e., 
\begin{itemize}
\item[(1)] All critical points of $\mathcal L$, except the double points,  with respect to the projection $\mathbb R^4 = \mathbb R^3 \times \mathbb R \to \mathbb R$ are elementary critical points, that are maximal points, saddle points and minimal points. 
\item[(2)]  $\mathcal L$  is in $\mathbb R^3 (-2,4)$. 
\item[(3)] All double points are in $\mathbb R^3 [3]$. All maximal, saddle and minimal points are in $\mathbb R^3 (-2,2)$
\item[(4)] For each $i$ $(i=1, \dots, n)$, $N(d_i) = N^3(d_i) [3-\epsilon, 3 + \epsilon]$ for a $3$-disk $N^3(d_i)$, and 
$N(d_i) \cap  \mathcal L $ is the cone of a Hopf link $H_i \subset ({\rm int} N^3(d_i))[3-\epsilon]$ with the cone point $d_i \in \mathbb R^3 [3]$. 
The $3$-disks $N^3(d_1), \dots, N^3(d_n)$ are mutually disjoint. 
\end{itemize} 

Let $p_1, \dots, p_m$  be the maximal points of $\mathcal L$, 
$q_1, \dots, q_{m'}$  be the minimal points of $\mathcal L$, 
 and let 
$N(p_1), \dots, N(p_m), N(q_1), \dots, N(q_{m'})$ be regular neighborhoods of them.  
Moving $\mathcal L$ by an ambient isotopy, we may assume the following conditions:  
\begin{itemize}
\item[(1)] All critical points of $\mathcal L$, except the double points,  with respect to the projection $\mathbb R^4 = \mathbb R^3 \times \mathbb R \to \mathbb R$ are elementary critical points, that are maximal points, saddle points and minimal points. 
\item[(2)]  $\mathcal L$  is in $\mathbb R^3 (-4,4)$. 
\item[(3)] All double points and all maximal points are in $\mathbb R^3 [3]$. All minimal points are in $\mathbb R^3 [-3]$. 
All saddle points are in  $\mathbb R^3 (-2, 2)$. 
\item[(4)] For each $i$ $(i=1, \dots, n)$, $N(d_i) = N^3(d_i) [3-\epsilon, 3 + \epsilon]$ for a $3$-disk $N^3(d_i)$, and 
$N(d_i) \cap  \mathcal L $ is the cone of a Hopf link $H_i \subset ({\rm int} N^3(d_i))[3-\epsilon]$ with the cone point $d_i$. 
The $3$-disks $N^3(d_1), \dots, N^3(d_n)$ are mutually disjoint.  
\item[(5)] For each $i$ $(i=1, \dots, m)$, $N(p_i) = N^3(p_i) [3-\epsilon, 3 + \epsilon]$ for a $3$-disk $N^3(p_i)$, and 
$N(p_i) \cap  \mathcal L $ is the cone of a trivial knot $O_i \subset ({\rm int} N^3(p_i))[3-\epsilon]$ with the cone point $p_i$. 
The $3$-disks $N^3(p_1), \dots, N^3(p_m)$ are mutually disjoint, and also disjoint from $N^3(d_1), \dots, N^3(d_n)$. 
\item[(6)] For each $i$ $(i=1, \dots, m')$, $N(q_i) = N^3(q_i) [-3-\epsilon, -3 + \epsilon]$ for a $3$-disk $N^3(q_i)$, and 
$N(q_i) \cap  \mathcal L $ is the cone of a trivial knot $O'_i \subset ({\rm int} N^3(q_i))[-3+ \epsilon]$ with the cone point $q_i$. 
The $3$-disks $N^3(q_1), \dots, N^3(q_{m'})$ are mutually disjoint.  
\end{itemize} 

Finally, applying the argument in \cite{KSS}, we can move all saddle points into the same hyperplane $\mathbb R^3 [0]$. 
Then we see the result. 
\end{proof}

\begin{rmk}
Theorem 1.4 of \cite{KamKawamu} states that any immersed and oriented surface-link is ambient isotopic to an immersed surface-link 
satisfying a certain condition. 
Applying the argument in \cite{KSS}, we can obtain an immersed surface-link required in Theorem~\ref{prop-adm}. 
\end{rmk}

\section{Moves on marked graph diagrams}
 \label{sect-moves}

We discuss moves on marked graph diagrams which preserve the ambient isotopy classes of the immersed surface-links presented by the diagrams.

\begin{figure}[ht]
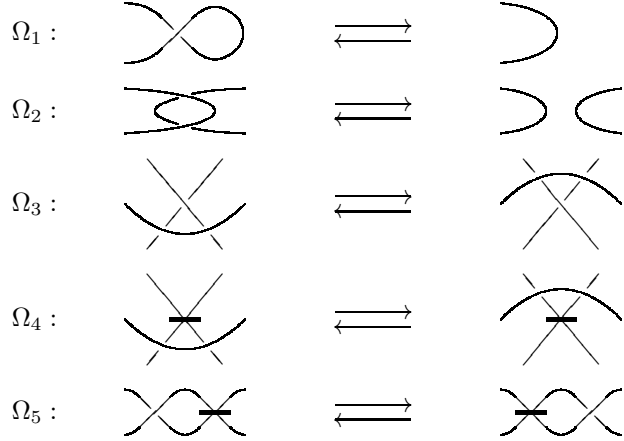

\begin{center}
\centerline{ 
\xy (12,2);(16,6) **@{-}, 
(12,6);(13.5,4.5) **@{-},
(14.5,3.5);(16,2) **@{-}, 
(16,6);(22,6) **\crv{(18,8)&(20,8)},
(16,2);(22,2) **\crv{(18,0)&(20,0)}, (22,6);(22,2) **\crv{(23.5,4)},
(7,8);(12,6) **\crv{(10,8)}, (7,0);(12,2) **\crv{(10,0)}, 
%(12.4,5) *{\ulcorner}, (15.6,5) *{\urcorner},
 %%%%
(35,5);(45,5) **@{-} ?>*\dir{>}, (35,3);(45,3) **@{-} ?<*\dir{<},
%(63.8,2) *{\urcorner},
 %%%%
(57,8);(57,0) **\crv{(67,7)&(67,1)}, (-5,4)*{\Omega_1 :}, (73,4)*{},
\endxy }

\vskip.3cm

 %%%%\centerline{ 
 %%%%\xy (12,2);(16,6) **@{-}, 
 %%%%(12,6);(13.5,4.5) **@{-},
 %%%%(14.5,3.5);(16,2) **@{-}, 
 %%%%(16,6);(22,6) **\crv{(18,8)&(20,8)},
 %%%%(16,2);(22,2) **\crv{(18,0)&(20,0)}, (22,6);(22,2) **\crv{(23.5,4)},
 %%%%(7,8);(12,6) **\crv{(10,8)}, (7,0);(12,2) **\crv{(10,0)}, 
 %%%% %(12.4,2.5) *{\llcorner}, (15.6,2.5) *{\lrcorner},
 %%%%
 %%%% (35,5);(45,5) **@{-} ?>*\dir{>}, (35,3);(45,3) **@{-} ?<*\dir{<},
 %%%% %(63.85,5.2) *{\lrcorner},
 %%%%
 %%%% (57,8);(57,0) **\crv{(67,7)&(67,1)}, (-5,4)*{\Omega_1' :}, (73,4)*{},
 %%%% \endxy }
 %%%%
 %%%% \vskip.3cm

\centerline{ \xy (7,7);(7,1)  **\crv{(23,6)&(23,2)}, (16,6.3);(23,7)
**\crv{(19,6.9)}, (16,1.7);(23,1) **\crv{(19,1.1)},
(14,5.7);(14,2.3) **\crv{(8,4)},
% (10,6.9) *{<}, (20,6.9) *{>},
 %%%%
(35,5);(45,5) **@{-} ?>*\dir{>}, (35,3);(45,3) **@{-} ?<*\dir{<},
 %%%%
(57,7);(57,1) **\crv{(65,6)&(65,2)}, (73,7);(73,1)
**\crv{(65,6)&(65,2)},% (60,5.3) *{\ulcorner}, (70,5.3) *{\urcorner}, 
(-5,4)*{\Omega_2 :},
\endxy}

\vskip.3cm

\centerline{ 
\xy (7,6);(23,6) **\crv{(15,-2)}, 
(10,0);(11.5,1.8) **@{-}, 
(17.5,3);(14.5,6.6) **@{-},
(14.5,6.6);(10,12) **@{-}, 
(20,12);(15.5,6.6) **@{-},
(14.5,5.5);(12.5,3) **@{-},
(18.5,1.8);(20,0) **@{-},
%(19.5,11) *{\urcorner}, (19,1.1) *{\lrcorner},  (9,3.5) *{\ulcorner},
 %%%%
(35,7);(45,7) **@{-} ?>*\dir{>}, 
(35,5);(45,5) **@{-} ?<*\dir{<},
 %%%%
(57,6);(73,6) **\crv{(65,14)}, 
(70,12);(68.5,10.2) **@{-}, 
(67.5,9);(65.5,6.5) **@{-}, 
(64.6,5.5);(60,0) **@{-}, 
(62.5,9);(64.4,6.6) **@{-}, 
(64.4,6.6);(70,0) **@{-}, 
(61.5,10.2);(60,12) **@{-},
%(69.5,11) *{\urcorner},  (69,1.1) *{\lrcorner},  (58,7) *{\llcorner},
(-5,6)*{\Omega_3:},
\endxy}

\vskip.3cm

 \centerline{ \xy 
 (7,6);(23,6)  **\crv{(15,-2)}, 
 (10,0);(11.5,1.8) **@{-},
(12.5,3);(20,12) **@{-}, 
(10,12);(17.5,3) **@{-}, 
(18.5,1.8);(20,0) **@{-}, 
(13,6);(17,6) **@{-}, (13,6.1);(17,6.1) **@{-}, (13,5.9);(17,5.9)
**@{-}, (13,6.2);(17,6.2) **@{-}, (13,5.8);(17,5.8) **@{-},
%%
%  (13,3.1) *{\urcorner}, (17.5,8.9) *{\llcorner}, (19,1.1) *{\lrcorner},  (21,3.5) *{\urcorner},(13,8) *{\ulcorner},
 %%%%
(35,7);(45,7) **@{-} ?>*\dir{>}, 
(35,5);(45,5) **@{-} ?<*\dir{<},
 %%%%
(57,6);(73,6)  **\crv{(65,14)}, 
(70,12);(68.5,10.2) **@{-},
(67.5,9);(60,0) **@{-}, 
(70,0);(62.5,9) **@{-}, 
(61.5,10.2);(60,12) **@{-}, 
(63,6);(67,6) **@{-}, (63,6.1);(67,6.1) **@{-}, (63,5.9);(67,5.9)
**@{-}, (63,6.2);(67,6.2) **@{-}, (63,5.8);(67,5.8) **@{-},
%%
%(62,2) *{\urcorner}, (67,8.3) *{\llcorner}, (67.5,3) *{\lrcorner},  (70.9,8) *{\lrcorner},(61.3,10) *{\ulcorner}, 
(-5,6)*{\Omega_4:},
\endxy}

\vskip.3cm

 \centerline{ \xy (9,2);(13,6) **@{-}, (9,6);(10.5,4.5) **@{-},
(11.5,3.5);(13,2) **@{-}, (17,2);(21,6) **@{-}, (17,6);(21,2)
**@{-}, (13,6);(17,6) **\crv{(15,8)}, (13,2);(17,2) **\crv{(15,0)},
(7,7);(9,6) **\crv{(8,7)}, (7,1);(9,2) **\crv{(8,1)}, (23,7);(21,6)
**\crv{(22,7)}, (23,1);(21,2) **\crv{(22,1)}, 
(17,4);(21,4) **@{-}, (17,4.1);(21,4.1) **@{-}, (17,3.9);(21,3.9)
**@{-}, (17,4.2);(21,4.2) **@{-}, (17,3.8);(21,3.8) **@{-},
%%
%(10,3) *{\llcorner},  (12,3) *{\lrcorner}, (21,6) *{\llcorner},(21,2.2) *{\lrcorner},
 %%%%
(35,5);(45,5) **@{-} ?>*\dir{>}, (35,3);(45,3) **@{-} ?<*\dir{<},
 %%%%
(59,2);(63,6) **@{-}, (59,6);(63,2) **@{-}, (67,2);(71,6) **@{-},
(67,6);(68.5,4.5) **@{-}, (69.5,3.5);(71,2) **@{-}, (63,6);(67,6)
**\crv{(65,8)}, (63,2);(67,2) **\crv{(65,0)}, (57,7);(59,6)
**\crv{(58,7)}, (57,1);(59,2) **\crv{(58,1)}, (73,7);(71,6)
**\crv{(72,7)}, (73,1);(71,2) **\crv{(72,1)}, 
(63,4);(59,4) **@{-}, (63,4.1);(59,4.1) **@{-}, (63,3.9);(59,3.9)
**@{-}, (63,4.2);(59,4.2) **@{-}, (63,3.8);(59,3.8) **@{-},
%%
%(59.5,2.5) *{\llcorner},  (59,6) *{\lrcorner}, (71,6) *{\llcorner},(71,2.2) *{\lrcorner},
 (-5,4)*{\Omega_5:},
\endxy }
\caption{ Moves of Type I}\label{fig-moves-type-I}
\end{center}
\end{figure}

\begin{figure}[ht]
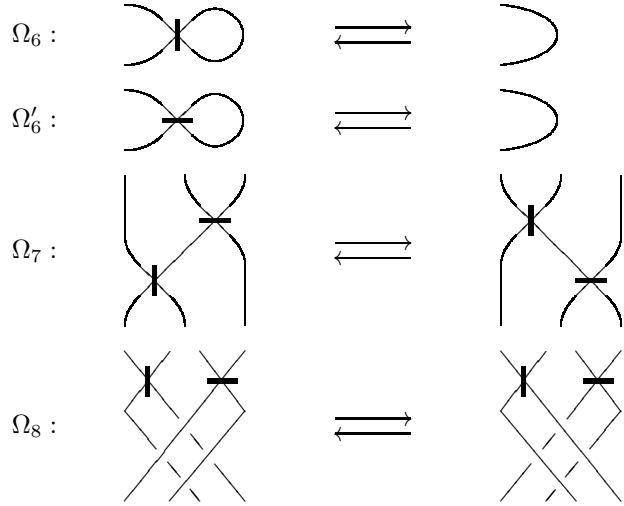

\begin{center}
\centerline{ \xy (12,6);(16,2) **@{-}, (12,2);(16,6) **@{-},
(16,6);(22,6) **\crv{(18,8)&(20,8)}, (16,2);(22,2)
**\crv{(18,0)&(20,0)}, (22,6);(22,2) **\crv{(23.5,4)}, (7,8);(12,6)
**\crv{(10,8)}, (7,0);(12,2) **\crv{(10,0)},
% (11,0.4) *{\urcorner}, (11,6) *{\ulcorner},(21.5,1)*{\urcorner},
 %%%%
(35,5);(45,5) **@{-} ?>*\dir{>}, (35,3);(45,3) **@{-} ?<*\dir{<},
 %%%%
(57,8);(57,0) **\crv{(67,7)&(67,1)}, (-5,4)*{\Omega_6 :}, (73,4)*{},
(14,6);(14,2) **@{-}, (14.1,6);(14.1,2) **@{-}, (13.9,6);(13.9,2)
**@{-}, (14.2,6);(14.2,2) **@{-}, (13.8,6);(13.8,2) **@{-}, 
%(63.8,2) *{\urcorner},
\endxy}

\vskip.3cm

\centerline{ \xy (12,6);(16,2) **@{-}, (12,2);(16,6) **@{-},
(16,6);(22,6) **\crv{(18,8)&(20,8)}, (16,2);(22,2)
**\crv{(18,0)&(20,0)}, (22,6);(22,2) **\crv{(23.5,4)}, (7,8);(12,6)
**\crv{(10,8)}, (7,0);(12,2) **\crv{(10,0)},
% (11,0.4) *{\urcorner}, (11,6) *{\ulcorner},(21.5,1)*{\urcorner},
 %%%%
(35,5);(45,5) **@{-} ?>*\dir{>}, (35,3);(45,3) **@{-} ?<*\dir{<},
 %%%%
(57,8);(57,0) **\crv{(67,7)&(67,1)}, (-5,4)*{\Omega'_6 :},
(73,4)*{}, (12,4);(16,4) **@{-}, (12,4.1);(16,4.1) **@{-},
(12,4.2);(16,4.2) **@{-}, (12,3.9);(16,3.9) **@{-},
(12,3.8);(16,3.8) **@{-}, 
%(63.8,2) *{\urcorner},
\endxy}

\vskip.3cm

\centerline{ \xy (9,4);(17,12) **@{-}, (9,8);(13,4) **@{-},
(17,12);(21,16) **@{-}, (17,16);(21,12) **@{-}, (7,0);(9,4)
**\crv{(7,2)}, (7,12);(9,8) **\crv{(7,10)}, (15,0);(13,4)
**\crv{(15,2)}, (17,16);(15,20) **\crv{(15,18)}, (21,16);(23,20)
**\crv{(23,18)}, (21,12);(23,8) **\crv{(23,10)}, (7,12);(7,20)
**@{-}, (23,8);(23,0) **@{-},
 %%%
(11,4);(11,8) **@{-}, 
(10.9,4);(10.9,8) **@{-}, 
(11.1,4);(11.1,8) **@{-}, 
(10.8,4);(10.8,8) **@{-}, 
(11.2,4);(11.2,8) **@{-},
(17,14);(21,14) **@{-}, 
(17,14.1);(21,14.1) **@{-},
(17,13.9);(21,13.9) **@{-}, 
(17,14.2);(21,14.2) **@{-},
(17,13.8);(21,13.8) **@{-},
%
%(7,15) *{\wedge},(23,5) *{\wedge},(15,10) *{\llcorner},(8,2.3) *{\urcorner}, (21.5,16) *{\urcorner},(16,17) *{\lrcorner},(13.7,3) *{\lrcorner},
 %%%%
(35,11);(45,11) **@{-} ?>*\dir{>}, (35,9);(45,9) **@{-} ?<*\dir{<},
 %%%%
(71,4);(63,12) **@{-}, (71,8);(67,4) **@{-}, (63,12);(59,16) **@{-},
(63,16);(59,12) **@{-}, (73,0);(71,4) **\crv{(73,2)}, (73,12);(71,8)
**\crv{(73,10)}, (65,0);(67,4) **\crv{(65,2)}, (63,16);(65,20)
**\crv{(65,18)}, (59,16);(57,20) **\crv{(57,18)}, (59,12);(57,8)
**\crv{(57,10)}, (73,12);(73,20) **@{-}, (57,8);(57,0) **@{-},
 %%%
(61,12);(61,16) **@{-}, 
(61.1,12);(61.1,16) **@{-},
(60.9,12);(60.9,16) **@{-}, 
(61.2,12);(61.2,16) **@{-},
(60.8,12);(60.8,16) **@{-},
%
%(57,5) *{\wedge},(73,15) *{\wedge},(65,10) *{\lrcorner},(58,17) *{\ulcorner}, (71.5,3) *{\ulcorner},(66.3,3) *{\llcorner},(63.7,16.8) *{\llcorner},
 %%%%
(67,6);(71,6) **@{-}, 
(67,6.1);(71,6.1) **@{-}, 
(67,5.9);(71,5.9) **@{-}, 
(67,6.2);(71,6.2) **@{-},
(67,5.8);(71,5.8) **@{-},  
(-5,10)*{\Omega_7:}, 
 \endxy}

\vskip.3cm

\centerline{ \xy (7,20);(14.2,11) **@{-}, (15.8,9);(17.4,7) **@{-},
(19,5);(23,0) **@{-}, (13,20);(7,12) **@{-}, (7,12);(11.2,7) **@{-},
(12.7,5.2);(14.4,3.2) **@{-}, (15.7,1.6);(17,0) **@{-},
(17,20);(23,12) **@{-}, (13,0);(23,12) **@{-}, (7,0);(23,20) **@{-},
 %%%
(10,18);(10,14) **@{-}, (10.1,18);(10.1,14) **@{-},
(9.9,18);(9.9,14) **@{-}, (10.2,18);(10.2,14) **@{-},
(9.8,18);(9.8,14) **@{-}, (18,16);(22,16) **@{-},
(18,16.1);(22,16.1) **@{-}, (18,15.9);(22,15.9) **@{-},
(18,16.2);(22,16.2) **@{-}, (18,15.8);(22,15.8) **@{-},
 %%%%
% (8.5,17.7) *{\ulcorner}, (18.5,17.7) *{\ulcorner}, (9.1,9) *{\ulcorner}, (12,18.4) *{\llcorner}, (14.5,1.6) *{\llcorner}, (21.8,18.4) *{\llcorner}, (17,12) *{\urcorner}, (21.7,1.6) *{\lrcorner},
 %% 
(35,11);(45,11) **@{-} ?>*\dir{>}, (35,9);(45,9) **@{-} ?<*\dir{<},
 %%%%
(73,20);(65.8,11) **@{-}, (64.2,9);(62.6,7) **@{-}, (61,5);(57,0)
**@{-}, (67,20);(73,12) **@{-}, (73,12);(68.8,7) **@{-},
(67.3,5.2);(65.6,3.2) **@{-}, (64.3,1.6);(63,0) **@{-},
(63,20);(57,12) **@{-}, (67,0);(57,12) **@{-}, (73,0);(57,20)
**@{-},
%%%%
% (58.5,17.7) *{\ulcorner}, (68.5,17.7) *{\ulcorner}, (59.1,9) *{\ulcorner}, (62,18.4) *{\llcorner}, (64,1.2) *{\llcorner}, (71.8,18.4) *{\llcorner}, (67,12) *{\urcorner}, (71.7,1.6) *{\lrcorner},
 %%%
(60,18);(60,14) **@{-}, (60.1,18);(60.1,14) **@{-},
(59.9,18);(59.9,14) **@{-}, (60.2,18);(60.2,14) **@{-},
(59.8,18);(59.8,14) **@{-}, (68,16);(72,16) **@{-},
(68,16.1);(72,16.1) **@{-}, (68,15.9);(72,15.9) **@{-},
(68,16.2);(72,16.2) **@{-}, (68,15.8);(72,15.8) **@{-},
(-5,10)*{\Omega_8:}, 
\endxy}
\caption{Moves of Type II}\label{fig-moves-type-II}
\end{center}
\end{figure}

The moves depicted in  Figs.~\ref{fig-moves-type-I} and~\ref{fig-moves-type-II} were  
introduced by Yoshikawa \cite{Yo} as moves on marked graph diagrams which do not change the ambient isotopy classes of their presenting surface-links.  The moves and their mirror images are called {\it Yoshikawa moves}. Furthermore, we call the moves in  Fig.~\ref{fig-moves-type-I} 
(Fig.~\ref{fig-moves-type-II}) 
and their mirror images {\it moves of type I} ({\it moves of type II}).  
Moves of type I do not change the ambient isotopy classes of marked graphs in $\mathbb R^3$, and moves of type II do. 
Note that Yoshikawa moves preserve $H$-admissibility and admissibility.   

It is known that two admissible marked graph diagrams present ambient isotopic surface-links if and only if they are related by Yoshikawa moves  (cf. \cite{KeKU, Sw, Yo}).  

Let $D$ be a link diagram of an $H$-trivial link $L$.  A crossing point $p$ of $D$ is an {\it unlinking crossing point} if it is a crossing between two components of the same Hopf link of $L$ and if the crossing change at $p$ makes the Hopf link into a trivial link.  

\begin{defn}
Let $D$ be an $H$-admissible marked graph diagram and let 
$D_-$ and $D_+$ be the diagrams of the lower resolution $L_-(D)$ and the upper resolution $L_+(D)$, respectively.  
A crossing point $p$ of $D$ is a {\it lower singular point} (or an  {\it upper singular point}, resp.) 
if $p$ is an unlinking crossing point of $D_-$ (or $D_+$, resp.). 
\end{defn}

We introduce new moves for $H$-admissible marked graph diagrams. They are 
the moves $\Omega_{9}$, $\Omega_{9}'$ and $\Omega_{10}$ in Fig.~\ref{fig-moves-type-III} and their mirror images, 
which we call {\it moves of type III}. 
Here we assume that the moves of type III are defined only if two diagrams appearing before and after the move are H-admissible.  For example, 
for the move $\Omega_{9}$ (or $\Omega_{9}'$, resp.) in Fig.~\ref{fig-moves-type-III}, we require that the component $l$ in the resolution $L_+(D)$ (or $L_-(D)$, resp.) is trivial and that $p$ is an upper (or lower, resp.) singular point.

\begin{figure}[ht]
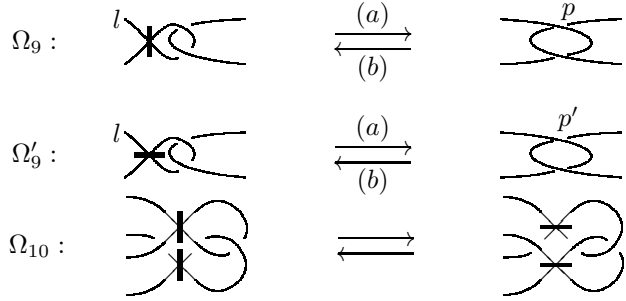

\begin{center}

\centerline{ \xy (57,7);(66,2.3)  **\crv{(71,6)&(71,3)}, (57,1);(64,1.8)  **\crv{(61,1.1)}, (66,6.3);(73,7)**\crv{(69,6.9)}, (73,1);(63.9,5.8)  **\crv{(59,2)&(59,5)}, 
 %%%%
 (40,7.5) *{(a)}, (40,0.5) *{(b)},
(35,5);(45,5) **@{-} ?>*\dir{>}, (35,3);(45,3) **@{-} ?<*\dir{<},
 %%%%
(7,7);(14,1.8)  **\crv{(10,5)&(12,1)},(7,1);(14,6.2)  **\crv{(10,3)&(12,7)}, (14,6.2);(16,3.3)  **\crv{(17,5)}, (16,6.3);(23,7)**\crv{(19,6.9)}, (23,1);(13.9,5.5)  **\crv{(12,2)&(12,4.5)}, 
 %%%%
(10.3,2);(10.3,6) **@{-}, (10.2,2);(10.2,6) **@{-}, (10.1,2);(10.1,6) **@{-}, (10.4,2);(10.4,6) **@{-}, (10.5,2);(10.5,6) **@{-}, (-5,4)*{\Omega_9 :},(66,8) *{p},(6,7) *{l},
\endxy}

\vskip.3cm

\centerline{ \xy (57,7);(66,2.3)  **\crv{(71,6)&(71,3)}, (57,1);(64,1.8)  **\crv{(61,1.1)}, (66,6.3);(73,7)**\crv{(69,6.9)}, (73,1);(63.9,5.8)  **\crv{(59,2)&(59,5)}, 
 %%%%
 (40,7.5) *{(a)}, (40,0.5) *{(b)},
(35,5);(45,5) **@{-} ?>*\dir{>}, (35,3);(45,3) **@{-} ?<*\dir{<},
 %%%%
(7,7);(14,1.8)  **\crv{(10,5)&(12,1)},(7,1);(14,6.2)  **\crv{(10,3)&(12,7)}, (14,6.2);(16,3.3)  **\crv{(17,5)}, (16,6.3);(23,7)**\crv{(19,6.9)}, (23,1);(13.9,5.5)  **\crv{(12,2)&(12,4.5)}, 
 %%%%
(8.3,4);(12.3,4) **@{-}, (12.3,4.1);(8.3,4.1) **@{-}, (12.3,3.9);(8.3,3.9)**@{-}, (12.3,4.2);(8.3,4.2) **@{-}, (12.3,3.8);(8.3,3.8)**@{-}, 
(-5,4)*{\Omega_9' :},(66,8.8) *{p'},(6,7) *{l},
\endxy}

\centerline{ \xy (12,6);(16,2) **@{-}, (12,2);(16,6) **@{-},
(16,6);(22,6) **\crv{(18,8)&(20,8)}, (16,2);(20.5,0.5)
**\crv{(18,0)&(20,0)}, (22,6);(22.5,2.5) **\crv{(23.5,4)}, (7,8);(12,6)
**\crv{(10,8)}, (7,0);(12,2) **\crv{(10,0)}, 
%(9.7,-0.5) *{\urcorner}, , (11,6) *{\ulcorner},(22,5.3)*{\ulcorner},
 %%%%
(35,2.5);(45,2.5) **@{-} ?>*\dir{>}, (35,0.5);(45,0.5) **@{-} ?<*\dir{<},
 %%%%
(-5,1.5)*{\Omega_{10} :}, (73,4)*{},
(14,6);(14,2) **@{-}, (14.1,6);(14.1,2) **@{-}, (13.9,6);(13.9,2)
**@{-}, (14.2,6);(14.2,2) **@{-}, (13.8,6);(13.8,2) **@{-}, 
%%%%%%%%%%%%%%%%%%%%%%%%%%%%%%%%%
(12.5,0.5);(16,-3) **@{-}, (12,-3);(15.5,0.5) **@{-},
(17.5,2.5);(22,1) **\crv{(18,3)&(21,3)}, (16,-3);(22,-3)
**\crv{(18,-5)&(20,-5)}, (22,1);(22,-3) **\crv{(23.5,-1)}, (7,3);(10.5,2.5)
**\crv{(10,3)}, (7,-5);(12,-3) **\crv{(10,-5)}, 
%(11,-4.6) *{\urcorner}, (9.7,1.7) *{\ulcorner},(21.5,-4)*{\urcorner},
 %%%%
(73,-1)*{},
(14,1);(14,-3) **@{-}, (14.1,1);(14.1,-3) **@{-}, (13.9,1);(13.9,-3)
**@{-}, (14.2,1);(14.2,-3) **@{-}, (13.8,1);(13.8,-3) **@{-}, 
%%%%%%%%%%%%%%%%%%%%%%%%%%%%%%%%%%%%%%%%%%%%%%%%%%
(62,6);(65.5,2.5) **@{-}, (62.5,2.5);(66,6) **@{-},
(66,6);(72,6) **\crv{(68,8)&(70,8)}, (67.5,0.5);(72,2)
**\crv{(68,0)&(71,0)}, (72,6);(72,2) **\crv{(73.5,4)}, (57,8);(62,6)
**\crv{(60,8)}, (57,0);(60.6,0.6) **\crv{(60,0)},
% (59.5,-0.9) *{\urcorner}, (61,6) *{\ulcorner},(72,1.8)*{\urcorner},
 %%%%
(62,4);(66,4) **@{-}, (62,4.1);(66,4.1) **@{-}, (62,3.9);(66,3.9)
**@{-}, 
%%%%%%%%%%%%%%%%%%%%%%%%%%%%%%%%%
(62,1);(66,-3) **@{-}, (62,-3);(66,1) **@{-},
(66,1);(70.7,2.3) **\crv{(68,3)&(70,3)}, (66,-3);(72,-3)
**\crv{(68,-5)&(70,-5)}, (72.5,0.5);(72,-3) **\crv{(73.5,-1)}, (57,3);(62,1)
**\crv{(60,3)}, (57,-5);(62,-3) **\crv{(60,-5)},
% (61,-4.6) *{\urcorner}, (59.7,1.5) *{\ulcorner},(71.5,-4)*{\urcorner},
 %%%%
(62,-1);(66,-1) **@{-}, (62,-1.1);(66,-1.1) **@{-}, (62,-0.9);(66,-0.9)
**@{-},
\endxy}

\vskip.3cm

\caption{Moves of Type III: $\Omega_9,\Omega_9'$ and $\Omega_{10}$}\label{fig-moves-type-III}
\end{center}
\end{figure}

\begin{defn} 
The {\it generalized Yoshikawa moves} are Yoshikawa moves (moves of type I and II) and moves of type III introduced above.  
Two marked graph diagrams are {\it stably equivalent} if they are related by a finite sequence of generalized Yoshikawa moves.
\end{defn}

\begin{thm}\label{thm-equiv}
Let $\mathcal L$ and $\mathcal L'$ be immersed surface-links presented by marked graph diagrams $D$ and $D'$, respectively. If $D$ and $D'$ are stably equivalent, then $\mathcal L$ and $\mathcal L'$ are ambient isotopic. 
\end{thm}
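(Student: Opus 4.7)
The plan is to verify, move by move, that each of the generalized Yoshikawa moves transforms the constructed immersed surface-link by an ambient isotopy of $\mathbb R^4$. Since stable equivalence is generated by single moves, this suffices.

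First, I would dispose of moves of type I. These moves do not change the ambient isotopy class of the underlying marked graph in $\mathbb R^3$, so the banded links $\mathcal{BL}(D)$ and $\mathcal{BL}(D')$ are ambient isotopic in $\mathbb R^3$. Spreading such an ambient isotopy trivially in the time direction yields an ambient isotopy of $\mathbb R^3[-1,1]$ carrying $\mathcal S^1_{-1}(D)$ to $\mathcal S^1_{-1}(D')$, together with matching homeomorphisms of the resolutions at the top and bottom levels. Since the cone systems $L'_\vee[-2,-1]$ and $L_\wedge[1,2]$ are uniquely determined up to ambient isotopy by $L_-(D)$ and $L_+(D)$ (as noted after the definition of $\mathcal S(D)$), the isotopy extends across $\mathbb R^3[-2,-1]$ and $\mathbb R^3[1,2]$. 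Next, for moves of type II I would invoke Yoshikawa's argument for embedded surface-links: each such move is supported in a $3$-ball $\times$ small interval, and in this local model one exhibits an explicit ambient isotopy of the corresponding piece of the surface; since the support of a type II move can be chosen disjoint from all cone systems over Hopf components and from the double points, the same local isotopy produces an ambient isotopy of $\mathcal S(D)$ to $\mathcal S(D')$ in the immersed setting.

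The substantive work is with the type III moves. For $\Omega_9$ (resp. $\Omega_9'$) the hypothesis is that the indicated component $l$ is trivial in $L_+(D)$ (resp. $L_-(D)$) and $p$ is an upper (resp. lower) singular point; equivalently, near $p$ one sees a local Hopf link in the upper (resp. lower) resolution of which one component is trivial, and the corresponding cone system in $\mathcal S(D)$ consists of two disks meeting in a single transverse double point. The move $\Omega_9$ replaces the crossing at $p$ by its crossing change, which at the $4$-dimensional level amounts to pushing the disk bounded by $l$ through the double point; this can be realized by an ambient isotopy supported in a small $4$-ball about the double point, in the spirit of a $1$-handle slide past a double point. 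The move $\Omega_{10}$ is similar but involves the additional rearrangement of a marker adjacent to the two crossings: here one identifies a Whitney-type disk in $\mathbb R^4$ whose boundary is formed by two arcs near the marker and two arcs near the unlinking crossing, and pushes the band across this disk. Since both diagrams before and after the move are H-admissible by hypothesis, the cone systems over the Hopf components on the two sides match up after the isotopy.

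The main obstacle is $\Omega_{10}$: one must identify the precise local $4$-dimensional picture and check that the band surgery plus cone structure before the move agrees, after the proposed ambient isotopy, with the band surgery plus cone structure after the move. Once the local model is set up, the isotopy itself is essentially a finger move around the double point and is straightforward to write down; the delicate point is bookkeeping the markers, the crossings, and the components of the Hopf link structure in both $L_-$ and $L_+$ at once, so that the isotopies constructed for the middle slab $\mathbb R^3[-1,1]$ match the (uniquely determined) cone systems placed in $\mathbb R^3[1,2]$ and $\mathbb R^3[-2,-1]$. With these local verifications for $\Omega_9$, $\Omega_9'$ and $\Omega_{10}$ in hand, concatenating the isotopies along a sequence of generalized Yoshikawa moves between $D$ and $D'$ completes the proof.
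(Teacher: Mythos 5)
Your reduction to single moves and your treatment of moves of type I and II are fine and consistent with what the paper does (the paper simply takes those cases as settled and concentrates on $\Omega_9$, $\Omega_9'$, $\Omega_{10}$). The problem is that for the type III moves --- which are the entire content of the theorem --- your proposal does not actually supply an argument: you state yourself that ``one must identify the precise local $4$-dimensional picture and check that the band surgery plus cone structure before the move agrees, after the proposed ambient isotopy, with the band surgery plus cone structure after the move,'' and that this bookkeeping is the main obstacle. That check is precisely the proof, and it is left undone. Moreover, your working description of $\Omega_9$ is not correct: $\Omega_9$ is not a crossing change at $p$, and ``pushing the disk bounded by $l$ through the double point'' is not an ambient isotopy (a bare crossing change at an unlinking crossing would replace a singular cone over a Hopf link by two disjoint embedded caps, changing the number of transverse double points of $\mathcal S(D)$, which no ambient isotopy can do). What makes $\Omega_9$ and $\Omega_9'$ valid is that the crossing modification is coupled with the creation or removal of a marked vertex, i.e.\ of a saddle point of the motion picture, so that both sides carry the same singular cone and differ only in how the height function describes the same local piece of surface; similarly, your ``Whitney-type disk'' for $\Omega_{10}$ is speculation rather than an identified local model.

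The paper's route, which you are missing, is to pass from the motion picture to broken surface diagrams in $3$-space in the sense of \cite{CS1}: one observes that $\Omega_9$ and $\Omega_9'$ correspond exactly to creation or removal of a saddle point, and $\Omega_{10}$ to shifting the level of the double point singularity, so the broken surface diagrams before and after the move are ambient isotopic in $\mathbb R^3$ (see Fig.~\ref{fig-nm4}); one then invokes the fact that immersed surface-links with ambient isotopic broken surface diagrams are ambient isotopic in $\mathbb R^4$. This intermediary is what turns the awkward $4$-dimensional bookkeeping you flag into a tractable $3$-dimensional statement. If you want to avoid broken surface diagrams and build the $4$-dimensional isotopies by hand, you must still write down the local models for all three moves and exhibit the isotopies explicitly; as it stands, the proposal identifies the difficulty but does not resolve it, and for $\Omega_9$ it starts from a mistaken reading of the move.
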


\begin{proof}
It suffices to show that  $\mathcal L$ and $\mathcal L'$ are ambient isotopic when $D'$ is obtained from $D$  by  a  move of $\Omega_9,$ $\Omega_9'$ or $\Omega_{10}.$ The moves $\Omega_9$ and  $\Omega_9'$ correspond to a creation or removal of a saddle point, and the move $\Omega_{10}$ corresponds to a change the level of double point singularity. See Fig.~\ref{fig-nm4}, which shows partial pictures of broken surface diagrams in $3$-space in the sense of \cite{CS1}.  
(In \cite{CS1}, embedded surfaces are discussed. However, broken surface diagrams are considered for immersed surface-links and it is true that if two broken surface diagrams are ambient isotopic in $3$-space then the immersed surface-links are ambient isotopic in $4$-space.) 
Since the moves $\Omega_9$, $\Omega_9'$ and  $\Omega_{10}$ do not change the ambient isotopy classes of broken surface diagrams in $3$-space, we see that $\mathcal L$ and $\mathcal L'$ are ambient isotopic. 
\end{proof} 

\begin{figure}[ht]
\begin{center}
\resizebox{0.8\textwidth}{!}{%
  \includegraphics[width=2cm]{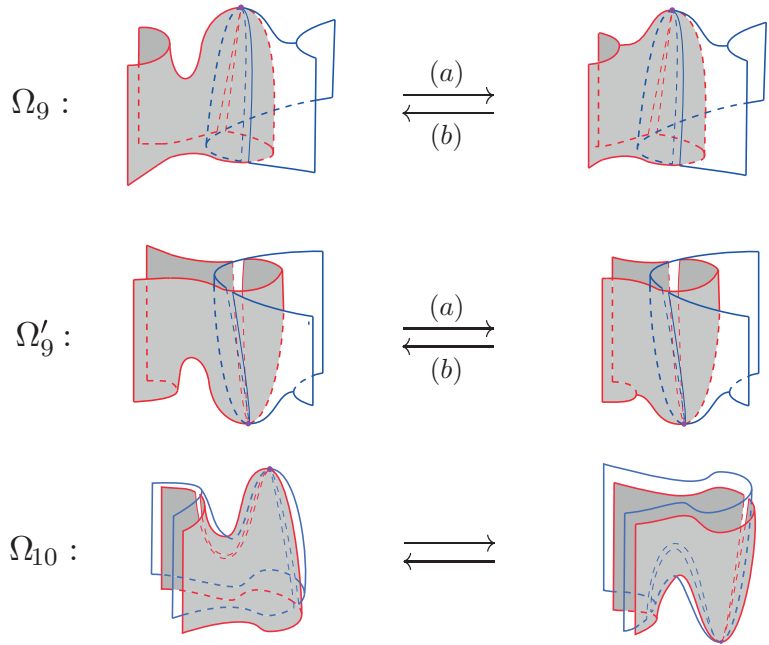}}
\caption{Immersed surface-links presented by $\Omega_9$, $\Omega_9'$, and $\Omega_{10}$}\label{fig-nm4}
\end{center}
\end{figure}

Let $\Omega_9^*$ and $\Omega_9'^*$ be the moves depicted in Fig.~\ref{fig-omoves} or their mirror images.  
They are equivalent to $\Omega_9$ and $\Omega_9'$ modulo Yoshikawa moves (of type I) as shown in Fig.~\ref{fig-m9a}.

\begin{figure}[ht]
\begin{center}
%\centerline{ \xy (12.5,5.5);(16,2) **@{-}, (12,2);(15.5,5.5) **@{-},
%(17.5,7.5);(22.3,6) **\crv{(19,9)&(21,9)}, (16,2);(22,2)**\crv{(18,0)&(20,0)}, (22.3,6);(22,2) **\crv{(23.5,4)}, (10.5,7.5);(7,11) **@{-}, (7,6.5);(21,6.5) **@{-}, (23,6.5);(24,6.5) **@{-}, (7,0);(12,2) **\crv{(10,0)}, 
 %%%%
%(35,5);(45,5) **@{-} ?>*\dir{>}, (35,3);(45,3) **@{-} ?<*\dir{<},
 %%%%
% (-5,4)*{M_1 :}, (73,4)*{},
 %%%%
%(14,6);(14,2) **@{-}, (14.1,6);(14.1,2) **@{-}, (13.9,6);(13.9,2)**@{-}, (14.2,6);(14.2,2) **@{-}, (13.8,6);(13.8,2) **@{-}, 
 %%%%
%(62,4);(66,4) **@{-}, (62,4.1);(66,4.1) **@{-}, (62,3.9);(66,3.9)**@{-}, (62,4.2);(66,4.2) **@{-}, (62,3.8);(66,3.8)**@{-}, 
 %%%%%%%%%%%%%%%
%(62.5,5.5);(66,2) **@{-}, (62,2);(67.5,7.5) **@{-},(67.5,7.5);(71.5,7.5) **\crv{(68.8,9)&(70,9)}, (66,2);(72,2)**\crv{(68,0)&(70,0)}, (72.3,6);(72,2) **\crv{(73.5,4)}, (62.5,5.5);(57,11) **@{-}, (57,6.5);(60,6.5) **@{-}, (62.8,6.5);(65,6.5) **@{-}, (68,6.5);(74,6.5) **@{-}, (57,0);(62,2) **\crv{(60,0)}, 
 %%%%
%\endxy}

%\vskip.3cm

\centerline{ \xy (12.5,5.5);(16,2) **@{-}, (12,2);(15.5,5.5) **@{-},
(17.5,7.5);(22.3,6) **\crv{(19,9)&(21,9)}, (16,2);(22,2)
**\crv{(18,0)&(20,0)}, (22.3,6);(22,2) **\crv{(23.5,4)}, (10.5,7.5);(7,11) **@{-}, (7,6.5);(21,6.5) **@{-}, (23,6.5);(24,6.5) **@{-}
, (7,0);(12,2) **\crv{(10,0)},
% (11,0.4) *{\urcorner}, (9,8.5) *{\ulcorner},(21.5,1)*{\urcorner}, (19.5,6.5) *{>},
 %%%%
 (40,7.5) *{(a)}, (40,0.5) *{(b)},
(35,5);(45,5) **@{-} ?>*\dir{>}, (35,3);(45,3) **@{-} ?<*\dir{<},
 %%%%
 (-5,4)*{\Omega_{9}^* :}, (73,4)*{},
(14,6);(14,2) **@{-}, (14.1,6);(14.1,2) **@{-}, (13.9,6);(13.9,2)
**@{-}, (14.2,6);(14.2,2) **@{-}, (13.8,6);(13.8,2) **@{-}, 
 %%%%%%%%%%%%%%%
(57,6.5);(60,6.5) **@{-},(74,6.5);(64,6.5) **@{-},(57,11);(57,0) **\crv{(68,4),(68,7)},
%(69.5,6.5) *{>},(58.8,0.4) *{\urcorner},(64,4) *{p},(10,2.3) *{l^+},
 %%%%
\endxy}

\vskip.3cm

\centerline{ \xy (12.5,5.5);(16,2) **@{-}, (12,2);(15.5,5.5) **@{-},
(17.5,7.5);(22.3,6) **\crv{(19,9)&(21,9)}, (16,2);(22,2)
**\crv{(18,0)&(20,0)}, (22.3,6);(22,2) **\crv{(23.5,4)}, (10.5,7.5);(7,11) **@{-}, (7,6.5);(21,6.5) **@{-}, (23,6.5);(24,6.5) **@{-}
, (7,0);(12,2) **\crv{(10,0)}, 
%(11,0.4) *{\urcorner}, (9,8.5) *{\ulcorner},(21.5,1)*{\urcorner}, (19.5,6.5) *{>},
 %%%%
 (40,7.5) *{(a)}, (40,0.5) *{(b)},
(35,5);(45,5) **@{-} ?>*\dir{>}, (35,3);(45,3) **@{-} ?<*\dir{<},
 %%%%
 (-5,4)*{\Omega_{9}'^* :}, (73,4)*{},
 (12,3.9);(16,3.9) **@{-},
**@{-}, (12,4);(16,4) **@{-}, (12,4.1);(16,4.1) **@{-},
 %%%%%%%%%%%%%%%
(57,6.5);(60,6.5) **@{-},(74,6.5);(64,6.5) **@{-},(57,11);(57,0) **\crv{(68,4),(68,7)},(69.5,6.5) *{>},%(58.8,0.4) *{\urcorner},
(64,4) *{p},(10,2.3) *{l^-},
%%%
\endxy}

\caption{Moves $\Omega_9^*$ and $\Omega_9'^*$}\label{fig-omoves}
\end{center}
\end{figure}

\begin{figure}[ht]
\begin{center}
\resizebox{0.65\textwidth}{!}{%
  \includegraphics[width=3.5cm]{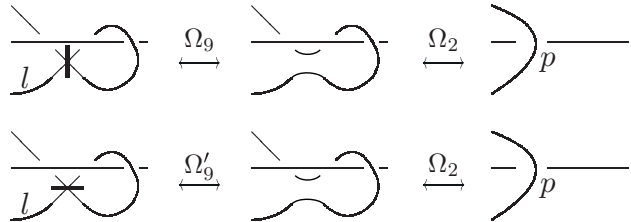}}
\caption{Moves $\Omega_9^*$, $\Omega_9'^*$ are equivalent to $\Omega_9$, $\Omega_9'$.}\label{fig-m9a}
\end{center}
\end{figure}

We conclude the paper by proposing a question. 

%A set $S$ of moves on marked graph diagrams are {\it independent} if every move $x$ belonging to $S$ is never obtained as a consequence of the other moves of $S$.    

\begin{que}
Suppose that $D$ and $D'$ are marked graph diagrams presenting ambient isotopic immersed surface-links.  Is $D$ stably equivalent to $D'$? 
\end{que} 

%\begin{que}
%\begin{enumerate}
%\item[(1)] Is the set of generalized Yoshikawa moves independent?
%\item[(2)] Suppose that $D$ and $D'$ are marked graph diagrams presenting ambient isotopic immersed surface-links.  Is $D$ stably equivalent to $D'$? 
%\end{enumerate}
%\end{que}


\begin{thebibliography}{9}

%\bibitem{BCCK}
%Y. Bae, J. S. Carter, S. Choi and S. Kim, Non-orientable surfaces in 4-dimensional space, 
%\textit{J. Knot Theory Ramifications} 23 (2014), 1430002. 


%\bibitem{BleSch}
%S. A. Bleiler and M. G. Scharlemann, A projective plane in $R^4$ with three critical points is standard. Strongly invertible knots have property P, %\textit{Topology} 27 (1988), 519--540.  

%\bibitem{Ca} T. Carrell, The surface biquandle, \textit{Pomona College}, 2009.

\bibitem{CKS2004} J. S. Carter, S. Kamada and M. Saito, Surfaces in $4$-space, Springer-Verlag, Berlin Heidelberg New York, 2004. 

\bibitem{CS1} J. S. Carter and M. Saito, Knotted surfaces and their diagrams, Mathematical Surveys and Monographs, {\bf 55}, American Mathematical Society, Providence, RI, 1998.

%\bibitem{CS2} J. S. Carter and M. Saito, Canceling branch points on the projections of surfaces in $4$-space, {\it Proc. Amer. Math. Soc.} {\bf116} (1992), no. 1,  229--237.

%\bibitem{CES} J. S. Carter, M. Elhamdadi and M. Saito, Homology Theory for the Set-Theoretic Yang-Baxter Equation and Knot Invariants from Generalizations of Quandles, {\it Fund. Math.} {\bf184}  (2004), 31--54. 

%\bibitem{FeJoKa} R. Fenn, M. Jordan-Santana and L. H. Kauffman, Biquandles and virtual links, {\it Topology Appl.} {\bf 145} (2004), 1--3, 157--175.

%\bibitem{Gi} C. Giller, Towards a classical knot theory for surfaces in $\mathbb R^4$, {\it Illinois J. Math.} {\bf 26} (1982), No. 4, 591--631.

%\bibitem{Kam1989}
%S. Kamada, Non-orientable surfaces in 4-space, \textit{Osaka J. Math.} 26 (1989), 367--385. 

\bibitem{KamBook2017}
S. Kamada, Surface-knots in $4$-space, Springer Monographs in Mathematics, Springer, 2017. 

\bibitem{KamKawamu}
S. Kamada and K. Kawamura, Ribbon-clasp surface-links and normal forms of singular surface-links, 
\textit{Topology Appl}, to appear, ArXiv: 1602.07855v1.   

%\bibitem{KaRa} L. H. Kauffman and D. E. Radford, Bi-oriented quantum algebras, and generalized Alexander polynomial for virtual links, {\it Contemp. Math.}, {\bf 318}, (2003), 113--140.

\bibitem{KSS}
A. Kawauchi, T. Shibuya, S. Suzuki, Descriptions on surfaces
in four-space, I; Normal forms, \textit{Math. Sem. Notes Kobe Univ.} {10} (1982), 75--125.

\bibitem{Kaw}
A. Kawauchi, \textit{A survey of knot theory}, Birkh\"auser, 1996.  

\bibitem{KeKU}
C. Kearton and V. Kurlin, All 2-dimensional links in 4-space live inside a universal 3-dimensional polyhedron, 
\textit{Algebr. Geom. Topol.} {8} (2008), no. 3,  1223--1247. 


\bibitem{Lo}
S. J. Lomonaco, Jr., The homotopy groups of knots I. How to compute the algebraic 2-type, 
\textit{Pacific J. Math.} {95} (1981), 349--390. 

%\bibitem{Po} M. Polyak, Minimal generating sets of Reidemeister moves, \textit{Quantum Topol.}  {\bf 1}  (2010),  no. 4, 399--411.

%\bibitem{Ro} D. Roseman, Reidemeister-type moves for surfaces in four dimensional space,  {\it Knot theory} ({\it Warsaw, 1995}),  347--380, Banach Center Publ., {\bf 42,} {\it Polish Acad. Sci., Warsaw}, 1998. 

%\bibitem{Sch} 
%M. G. Scharlemann, Smooth spheres in $R^4$ with four critical points are standard, \textit{Invent. Math.} 79 (1985), 125--141.  

%\bibitem{Soma}
%M. Soma, Surface-links with square-type ch-graphs,
%Proceedings of the First Joint Japan-Mexico Meeting in Topology
%(Morelia, 1999),  \textit{Topology Appl.} {121} (2002), 231--246.

\bibitem{Sw}
F. J. Swenton, On a calculus for 2-knots and surfaces in 4-space, 
\textit{J. Knot Theory Ramifications} 10 (2001), 1133--1141.  

\bibitem{Yo}
K. Yoshikawa, An enumeration of surfaces in four-space,
 \textit{Osaka J. Math.} {\bf 31} (1994), 497--522.  


\end{thebibliography}
\end{document}